\theoremstyle{definition}
\newtheorem{thm}{Theorem}[section]
\newtheorem{lemma}[thm]{Lemma}
\newtheorem{corollary}[thm]{Corollary}
\newtheorem{remark}[thm]{Remark}
\newtheorem{notation}[thm]{Notation}
\newtheorem{technique}{Technique}
\newtheorem{thmx}{Theorem}
\def\c{\mathbb{C}}
\def\f{\mathbb{F}}
\def\h{\mathbb{H}}
\def\n{\mathbb{N}}
\newcommand{\kappabar}{\bar{\kappa}}
\newcommand{\xra}{\xrightarrow}
\def\ker{\operatorname{ker}}
\def\coker{\operatorname{coker}}
\def\fib{\operatorname{fib}}
\def\tmf{\mathrm{tmf}}
\renewcommand{\h}{\operatorname{h}}
\newcommand{\mr}[1]{\mathrm{#1}}
\author{Irina Bobkova}\address{Texas A\&M University}\email{ibobkova@tamu.edu}
\author{J.D. Quigley}\address{University of Virginia}\email{mbp6pj@virginia.edu}
\title{New simple $\eta$-torsion families of elements in the stable stems}
\begin{document}

\begin{abstract}
We produce five $192$-periodic infinite families of simple $\eta$-torsion elements in the stable homotopy groups of spheres with trivial image under the $\tmf$-Hurewicz homomorphism. We also establish that several other 192-periodic families in the stable stems, which are in the $\tmf$-Hurewicz image, consist of simple $\eta$-torsion elements. 
\end{abstract}

\maketitle

\section{Introduction}

Computing the stable homotopy groups of spheres is a longstanding problem in algebraic topology, with applications in algebra, geometry, and topology. There has been significant progress in understanding the stable homotopy groups of spheres in recent years, such as the \emph{tour de force} ``low-dimensional" computations of Isaksen, Lin, Wang, and Xu \cite{Isa19, IWX20b, IWX23, LWX24} using motivic homotopy theory, the breakthrough disproof of the telescope conjecture on large-scale patterns in the stable stems by Burklund--Hahn--Levy--Schlank \cite{BHLS23}, and the analysis of geometrically significant families of elements like the Kervaire invariant one elements studied by Hill--Hopkins--Ravenel \cite{HHR16} using equivariant homotopy theory. We refer the reader to \cite{BBQ24, IWX20b, IWX23} for a more thorough discussion of the history of this problem. 

The stable homotopy groups of spheres are finite above dimension zero, so it is possible to study them one prime at a time. For the rest of this paper, we work in the $2$-local setting. There is a long tradition of producing infinite families of elements in the stable homotopy groups of spheres using Hurewicz homomorphisms to generalized homology groups. For instance, the revolutionary work of Adams \cite{Ada66} on the Hurewicz homomorphism for real topological K-theory produced two $8$-periodic infinite families of elements. More recently, Behrens--Hill--Hopkins--Mahowald \cite{BHHM20} and Behrens--Mahowald--Quigley \cite{BMQ23}  used the Hurewicz homomorphism for connective topological modular forms, $\tmf$, to produce many new $192$-periodic infinite families of elements.

Together with Bhattacharya \cite{BBQ24}, we recently took these methods further and produced seven new infinite families in the stable homotopy groups of spheres. These elements were constructed using $\tmf$ and the mod two Moore spectrum, $M$, i.e., the suspension spectrum of the real projective plane. By virtue of their construction, these elements were all simple $2$-torsion. 

The starting point of the analysis in \cite{BBQ24} was the fact that the Hurewicz homomorphism $\pi_*A_1 \to \tmf_*A_1$ is surjective \cite{Pha23}. We then passed from $\pi_* A_1$ to $\pi_*S$ through the long exact sequences in homotopy and $\tmf$-homology associated to the cofiber sequences building $A_1$ from $S$:
\begin{align}
&S\xra{2} S \to V(0) ,\label{eq:S-S-V0} \\
&\Sigma V(0) \xra{\eta} V(0) \to Y ,\label{eq:V0-V0-Y} \\
&\Sigma^2 Y \xra{v_1} Y \to A_1. \label{Y-Y-A1}
\end{align}

In this work, we make a simple modification, using the long exact sequences associated to a different set of cofiber sequences building $A_1$ from $S$:
\begin{align}
&\Sigma S\xra{\eta} S \xra{i_1} C,  \label{eq:S-S-C}\\
&C \xra{2} C \xra{i_2} Y,  \label{eq:C-C-Y} \\ 
&\Sigma^2 Y \xra{v_1} Y \xra{i_3} A_1. \label{eq:Y-Y-A1}
\end{align}
Applying the techniques of \cite{BBQ24}, we find five infinite families with trivial $\tmf$-Hurewicz image.

\begin{thmx}\label{MT}
For each $m \in \{23, 73, 95, 120, 145\}$ and  $k \in \n$, there exists a simple $\eta$-torsion element in dimension $m + 192k$ of the stable stems whose image is trivial under the $\tmf$-Hurewicz homomorphism. 
\end{thmx}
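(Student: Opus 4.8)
The plan is to carry out the bootstrapping argument of \cite{BBQ24} along the cofiber sequences \eqref{eq:S-S-C}--\eqref{eq:Y-Y-A1}. The two inputs it rests on are the surjectivity of the $\tmf$-Hurewicz homomorphism $\pi_* A_1 \to \tmf_* A_1$ established in \cite{Pha23}, and the computation of $\tmf_* A_1$ together with its $192$-periodicity, which comes from a $v_2^{32}$-self map of $A_1$ and is recorded by $\tmf$-homology compatibly with multiplication by $\Delta^8 \in \pi_{192}\tmf$ \cite{BMQ23}. First I would pin down the two $192$-periodic families of classes in $\tmf_* A_1$ that the argument uses, namely classes $\tilde x_k \in \tmf_{79+192k} A_1$ and $\tilde y_k \in \tmf_{126+192k} A_1$ for $k \in \n$, and lift them along Pham's surjection to $x_k \in \pi_{79+192k} A_1$ and $y_k \in \pi_{126+192k} A_1$. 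The source dimensions $79$ and $126$ are dictated by bookkeeping: the connecting maps of \eqref{eq:S-S-C}--\eqref{eq:Y-Y-A1} assemble, after suspension, into the collapse of $A_1$ onto its (unique) top cell,
\[
A_1 \xrightarrow{\ \partial\ } \Sigma^{3} Y \xrightarrow{\ \Sigma^{3}\partial\ } \Sigma^{4} C \xrightarrow{\ \Sigma^{4}\partial\ } \Sigma^{6} S,
\]
so the whole construction lowers degree by $6$, and $79-6 = 73$, $126 - 6 = 120$.

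Next I would push the lifts $x_k$, $y_k$ down this tower: applying the connecting map of \eqref{eq:Y-Y-A1} yields classes in $\pi_* Y$, applying that of \eqref{eq:C-C-Y} yields classes in $\pi_* C$, and applying that of \eqref{eq:S-S-C} yields the sought elements $\alpha_k \in \pi_{73+192k} S$ and $\beta_k \in \pi_{120+192k} S$. By construction $\alpha_k$ and $\beta_k$ lie in the image of the connecting map $\partial \colon \pi_* C \to \pi_{*-2} S$ of \eqref{eq:S-S-C}; since the following map in that cofiber sequence is, up to sign, multiplication by $\eta$, exactness immediately gives $\eta \alpha_k = \eta \beta_k = 0$. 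This is the ``simple $\eta$-torsion'' half of the statement, and it is exactly the point at which replacing the mod $2$ Moore spectrum of \cite{BBQ24} by the complex projective plane $C$ converts $2$-torsion output into $\eta$-torsion output. For the vanishing of the $\tmf$-Hurewicz image, naturality of the connecting maps under the Hurewicz transformation identifies the $\tmf$-Hurewicz image of $\alpha_k$ (resp.\ $\beta_k$) with the image of $\tilde x_k$ (resp.\ $\tilde y_k$) under the induced $\tmf_*$-level collapse $\tmf_* A_1 \to \tmf_{*-6} S$, so it is enough to check that this map annihilates the chosen classes. This is a finite computation in the known ring $\tmf_*$ and the known $\tmf_*$-module $\tmf_* A_1$ through the relevant range, where the target groups $\tmf_{73+192k} S$ and $\tmf_{120+192k} S$, together with the cells on which the classes are supported, largely control the answer.

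The main obstacle is showing that $\alpha_k$ and $\beta_k$ are nonzero in $\pi_* S$. Since their $\tmf$-Hurewicz images vanish, this cannot be detected through $\tmf$, and I would instead detect them in the classical Adams spectral sequence. Following \cite{BBQ24}, the idea is to choose $\tilde x_k$ and $\tilde y_k$ as far down the divisibility tower in $\tmf_* A_1$ as possible, divisible enough that a nonzero bootstrapped sphere class is forced, while still being killed by the $\tmf_*$-level collapse, so that the two requirements are compatible. Concretely one lifts the entire picture to $\Ext_{\ca}(\f_2,\f_2)$, follows the $A_1$-classes through the maps $\Ext_{\ca}(H^*A_1,\f_2) \to \Ext_{\ca}(\f_2,\f_2)$ induced by the three connecting maps, identifies the resulting classes as specific nonzero elements of the Adams chart in stems near $73$ and $120$ using the known charts in that range and the algebraic structure of $\ca(1)$, and rules out incoming Adams differentials. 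The genuinely delicate part is the bookkeeping: at each stage one must check that the element carried in $\pi_* Y$ and then in $\pi_* C$ is the expected one and is not accidentally zero, nor divisible in a way that makes the next connecting map vanish on it. As in \cite{BBQ24}, this is best organized through a single diagram interlocking the three cofiber sequences \eqref{eq:S-S-C}--\eqref{eq:Y-Y-A1}. A final comparison against the families already produced in \cite{Ada66, BHHM20, BMQ23, BBQ24} confirms that these two are new.
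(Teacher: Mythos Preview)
Your overall bootstrapping strategy down the tower of cofiber sequences matches the paper's, and your arguments for simple $\eta$-torsion and for the vanishing of the $\tmf$-Hurewicz image are essentially correct. The gap is in the nonvanishing step, and the misconception behind it is instructive. You assert that since the $\tmf$-Hurewicz images of $\alpha_k$ and $\beta_k$ vanish, nonvanishing ``cannot be detected through $\tmf$'', and you therefore propose an Adams spectral sequence argument that you yourself flag as the delicate unresolved part.

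The paper's key observation is that nonvanishing \emph{is} detected through $\tmf$ --- just one step up, at the level of $C$ rather than $S$. The class $\tilde c \in \pi_{75}C$ (resp.\ $\pi_{122}C$) obtained by pushing down from $Y$ has nonzero $\tmf$-Hurewicz image $c$, and one checks from the long exact sequence in $\tmf$-homology that $c = i_*(s)$ for the specific element $s = \eta^3\Delta^3 \in \tmf_{75}S$ (resp.\ $\eta^2\Delta^5 \in \tmf_{122}S$), which by \cite{BMQ23} is \emph{not} in the $\tmf$-Hurewicz image of $S$. If $\tilde c$ lay in the image of $i_*\colon \pi_*S \to \pi_*C$, then applying $h$ and comparing with $i_*(s)=c$ (using that $i_*$ is injective on $\tmf_*S$ in these degrees) would force $s$ into the Hurewicz image; hence $\tilde c \notin \im i_*$, and $p_*(\tilde c) \neq 0$ in $\pi_*S$ by exactness. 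This is precisely the mechanism of \cite{BBQ24} with $M$ replaced by $C$, and it handles all $k$ simultaneously via the faithful $\Delta^8$-action on $\tmf_*C$. Your proposed Adams spectral sequence detection, by contrast, would require controlling differentials near stems $73+192k$ and $120+192k$ for every $k$, far beyond the computed range, and is not actually carried out in your sketch.
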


\begin{remark}
In \cite{BBQ24}, no elements were constructed in dimensions congruent to $73$, $120$, and $145$ modulo $192$, so these are new infinite families. On the other hand, it is likely that the elements in dimensions congruent to $23$ and $120$ modulo $192$ coincide with the elements in the same dimensions constructed in \emph{loc. cit.}
\end{remark}

\begin{remark}\label{Rmk:Adams}
When $k=0$, we expect the elements in dimensions $23$ and $73$ to be detected by $gh_1^3$ and $\Delta h_1 d_0 e_0^2$, respectively, in the Adams spectral sequence (cf. \cite{IWX23}). A rigorous proof should be possible using the geometric boundary theorem \cite[Lem. A.4.1]{Beh12} and the analysis of the Adams spectral sequences for $\tmf_*S$ and $\tmf_*C$ in \cite{BR21}. 
\end{remark}

\begin{remark}
Adapting the proof of \cite[Thm. B]{BBQ24}, it is easy to show that the elements from \cref{MT} have nontrivial images in the $T(2)$- and $K(2)$-local stable stems. 
\end{remark}

\begin{remark}
After the initial appearance of this paper, Carrick and Davies \cite{CD25} used a stronger detection spectrum
$$J_0(3) := \fib(\psi^3-1: \tmf \to \tmf)$$
to detect many new infinite families in the stable stems. It is likely that the infinite families in this work are a subset of the ones constructed in their work, though we will not attempt a precise comparison here. It is worth noting that the spectrum $J_0(3)$ may only be used to construct infinite families one degree away from families in $\tmf_*$, while the families detected using $C$ as in this work will be two degrees away from families in $\tmf_*$. A more detailed analysis of the $\tmf$-Hurewicz homomorphism for $C$ could yield infinite families in a different range of degrees. 
\end{remark}

Aside from increasing our understanding of the stable stems, \Cref{MT} has some interesting geometric consequences. An \emph{exotic sphere} is a smooth manifold which is homeomorphic, but not diffeomorphic, to the sphere with its standard smooth structure. An exotic sphere is called \emph{very exotic} if it does not bound a parallelizable manifold. By work of Kervaire and Milnor \cite{KM63}, almost every nontrivial element in the cokernel of the $J$-homomorphism $J: \pi_nO \to \pi_n^s$ detects a very exotic sphere. Motivated by a conjecture of Wang and Xu about (not necessarily very) exotic spheres, we conjectured in \cite{BBQ24} that very exotic spheres exist in all dimensions greater than $4$, except dimensions $5$, $6$, $11$, $12$, $27$, $43$, $56$, and $61$. 

The results of \cite{Ada66, BHHM20, BMQ23, BBQ24} imply that very exotic spheres exist in at least $99$ of the congruence classes of dimensions modulo $192$. These did not include dimensions congruent to $120$ modulo $192$, so \Cref{MT} implies the following.

\begin{corollary}
Very exotic spheres exist in at least $100$ of the congruence classes of dimensions modulo $192$.
\end{corollary}

There is another connection to geometric topology motivating this work. In \cite{Hsi66, Bru69, Bru71}, Brumfiel and W.-C. Hsiang related the stable cohomotopy groups of complex projective spaces to smooth free circle actions on exotic spheres. In \cite{BQ26}, T. Bauer and the second author use the $\tmf$-Hurewicz homomorphism to study these cohomotopy groups, detecting infinite families of exotic spheres with smooth free circle actions. A typical question in the analysis is if lifts of certain elements in the $\tmf$-Hurewicz image are simple $\eta$-torsion in the stable stems. In technical terms, this condition ensures that an associated class in the stable cohomotopy of $\c P^{2n-1}$ extends to a class in the stable cohomotopy of $\c P^{2n}$. 

In other words, knowledge of whether or not an element with nontrivial $\tmf$-Hurewicz image is simple $\eta$-torsion a key ingredient in understanding smooth free circle actions on exotic spheres. This information can be gleaned from the long exact sequence associated to the cofiber sequence \eqref{eq:S-S-C}, which we analyze heavily in this work (building on previous results of Bruner and Rognes \cite{BR21}). Our results are summarized in the following. 

\begin{thmx}\label{MTb}
There exists a simple $\eta$-torsion element in the stable stems with $\tmf$-Hurewicz image $\Delta^{8k}x$, $k \geq 0$, where
$$x \in \{\kappa \nu, 4 \bar{\kappa}, \bar{\kappa}^2 \eta^2, \kappa \cdot \Delta^2 \nu, \bar{\kappa} \cdot 4\Delta^2, \bar{\kappa}^4, \bar{\kappa}^3 \cdot \eta \Delta, \bar{\kappa}^2 \cdot \eta^2\Delta^2, \bar{\kappa}^5, \bar{\kappa}^4 \cdot \eta \Delta, $$
$$\quad \quad \bar{\kappa}^3 \cdot \eta^2 \Delta^2, \nu^2 \cdot \kappa \Delta^4, \bar{\kappa}^5 \cdot \eta \Delta, \bar{\kappa}^5 \cdot \eta^2 \Delta^2, \bar{\kappa} \cdot 4 \Delta^6\}.$$
\end{thmx}

\begin{remark}
Many of the above elements can be written as products of elements in the $\tmf$-Hurewicz image where one factor is simple $\eta$-torsion. For these elements, one can prove $\eta \cdot \Delta^{8k}x = 0$ more easily using relations in the low-dimensional stable stems like $\nu \eta = 0$, $2\eta = 0$, and $\eta \cdot \bar{\kappa}^3 = 0$. For instance, we have $\eta \cdot \Delta^{8k}\kappa \nu = \eta \cdot \nu \cdot \Delta^{8k}\kappa = 0$ and  $\eta \cdot 4 \bar{\kappa} = \eta \cdot 2 \cdot 2 \bar{\kappa} = 0$. However, we cannot decompose the elements
$$\bar{\kappa}^2 \eta^2, \  \kappa \cdot \Delta^2 \nu,  \ \bar{\kappa}^2 \cdot \eta^2 \Delta^2$$
as products of elements in the $\tmf$-Hurewicz image where one factor lifts to a simple $\eta$-torsion element. Our more involved analysis using $A_1$ is the only proof we know that $\Delta^{8k}x$ admits a simple $\eta$-torsion lift in the stable stems for these three choices of $x$. 
\end{remark}

\subsection{Similarities and differences with previous work}

The general strategy and first step in this work closely mirrors that of our previous work \cite{BBQ24}. Starting with Pham's result that $\pi_*A_1 \to \tmf_*A_1$ is surjective, we use the long exact sequences in homotopy and $\tmf$-homology associated to the cofiber sequences \eqref{eq:S-S-C}, \eqref{eq:C-C-Y}, and \eqref{eq:Y-Y-A1}, similar to the use of the cofiber sequences \eqref{eq:S-S-V0}, \eqref{eq:V0-V0-Y}, and \eqref{Y-Y-A1} in \cite{BBQ24}, to pass from $\pi_*A_1$ to $\pi_*S$. Since the cofiber sequences \eqref{eq:Y-Y-A1} and \eqref{Y-Y-A1} are the same, the first step of passing from $A_1$ to $Y$ is already taken care of by \cite{BBQ24}; indeed, the left-most column of the table summarizing our new calculations (\cref{Table:A1lifts}) is the same as that of \cite[Table 2]{BBQ24}. 

The passage from $Y$ to $S$ is where this paper diverges significantly from our previous work, both in outline and in execution. As described above, in this paper we use \eqref{eq:S-S-C} and \eqref{eq:C-C-Y} to pass from $Y$ to $S$ instead of using \eqref{eq:S-S-V0} and \eqref{eq:V0-V0-Y}. This seemingly minor change has a major effect on how we analyze the resulting long exact sequences. 

In \cite{BBQ24}, we benefited greatly from existing knowledge \cite{Bau08,BBPX22} of the Adams--Novikov spectral sequences converging to $\tmf_*S$, $\tmf_*V(0)$, and $\tmf_*Y$. This compatibility between the ways these groups were computed gave us access to simple but powerful filtration arguments \cite[Technique 3]{BBQ24} which we used to determine the fates of classes in $\tmf_*Y$ in the long exact sequence associated to \eqref{eq:V0-V0-Y}. 

In this paper, we start with classes in $\tmf_*Y$, which has only been computed using the Adams--Novikov spectral sequence \cite{BBPX22}, but pass through $\tmf_*C$, which has only been computed using the Adams spectral sequence \cite{BR21}. While these spectral sequence could be compared with quite sophisticated methods, it turns out that we are able to completely analyze the fate of the relevant classes from $\tmf_*Y$ in the long exact sequence associated to \eqref{eq:C-C-Y} using simpler methods. In particular,  linearity with respect to the action of $\nu \in \pi_3S$ and $\bar{\kappa} \in \pi_{20}S$ allows us to reduce to lower dimensions which can be handled using sparsity arguments.

\subsection*{Acknowledgments}

This work extends the ideas from \cite{BBQ24} which were developed with Prasit Bhattacharya. The authors thank their coauthor for many valuable discussions. The authors also thank Dan Isaksen for a discussion leading to \cref{Rmk:Adams}. This work was supported by NSF grants DMS-2135884, DMS-2239362, DMS-2414922, and DMS-2441241. 

\section{Methods}\label{Sec:Eff}

For the rest of the paper, everything is implicitly $2$-local. Let $S$ denote the sphere spectrum. The finite complexes $C$, $Y$, and $A_1$ are defined via cofiber sequences \eqref{eq:S-S-C}, \eqref{eq:C-C-Y} and \eqref{eq:Y-Y-A1}
where $\eta$ is the first Hopf invariant one map, $\cdot 2$ is the degree two map, and $v$ is a  $v_1$-self-map of $Y$ \cite{DM81}. When the choice of $v_1$-self-map affects the action of $v_1$ on an element in $\tmf_*Y$, we always pick the self-map which makes the element simple $v_1$-torsion. 

The cofiber sequences \eqref{eq:S-S-C}, \eqref{eq:C-C-Y} and \eqref{eq:Y-Y-A1} give rise to long exact sequences in $\tmf$-homology
\begin{equation}\label{eqn:SSC}
\cdots \to \tmf_{k-1}S \xrightarrow{\eta} \tmf_kS \xra{i_1} \tmf_k C \xra{p_1} \tmf_{k-2}S \to \cdots,
\end{equation}
\begin{equation}\label{eqn:CCY}
\cdots \to \tmf_{k}C \xra{2} \tmf_kC \xra{i_2} \tmf_kY \xra{p_2} \tmf_{k-1}C \to \cdots,
\end{equation}
\begin{equation}\label{eqn:YYA1}
\cdots \to \tmf_{k-2}Y \xra{v} \tmf_kY \xra{i_3} \tmf_kA_1 \xra{p_3} \tmf_{k-3}Y \to \cdots
\end{equation}
and analogous  long exact sequences in homotopy. These long exact sequences are related by the $\tmf$-Hurewicz homomorphism $\h: \pi_*X \to \tmf_*X$. If $x \neq 0 \in \tmf_*X$, we write $\widetilde{x}$ for any class in $\pi_*X$ such that $\h(\widetilde{x}) = x$. 

To obtain infinite families, we use the following analysis.

\begin{thm}\label{thm:families}
Let $a \in \tmf_k A_1$ be an element such that the class $c := p_2(p_3(a)) \neq 0 \in \tmf_{k-4}C$. 

\begin{enumerate}

\item Suppose $p_1(c) = 0 \in \tmf_{k-6}S$, so $c$ admits a lift along $i_1$ in \eqref{eqn:SSC}. If moreover $i_1^{-1}(c) \cap \operatorname{im}(\h) = \emptyset$, then there exists a $192$-periodic infinite family of nonzero elements in the stable stems
$$\{ \overline{s}_{k-6+192i} \in \pi_{k-6+192i}(S) : i \in \mathbb{N} \}$$
such that $\overline{s}_{k-6+192i} \neq 0$ and $\h(\overline{s}_{k-6+192i})=0$ for all $i \in \mathbb{N}$. 

\item Suppose $p_1(c) \neq 0 \in \tmf_{k-6}S$ is in the image of the Hurewicz homomorphism $\h$. Then there exists a $192$-periodic infinite family of simple $\eta$-torsion elements in the stable stems
$$\{ \tilde{s}_{k-6+192i} \in \pi_{k-6+192i}(S) : i \in \mathbb{N} \}$$
such that $\tilde{s}_{k-6+192i} \neq 0$ and $\h(\tilde{s}_{k+6+192i}) = \Delta^i p_1(c)$ for all $i \in \n$.

\end{enumerate}

\end{thm}

\begin{proof}
We give a detailed proof of (1); the proof of (2) is similar. From the long exact sequence \eqref{eq:Y-Y-A1} we see that the classes $p_3(a)$ are precisely the elements in $\pi_*Y$ which are simple $v_1$-torsion. Since any $a \in \tmf_*A_1$ is in the image of the Hurewicz map, we have that any element of the form $p_3(a) \in \tmf_*Y$ (for some $a \in \tmf_*A_1$) is in the Hurewicz image as well.  
Then the commutative diagram 
	\begin{equation} \label{LES1}
	\begin{tikzcd}
		\cdots \arrow{r} & \pi_{k-3} C \arrow{r}{i_2} \dar["{\sf h}"'] & \pi_{k-3} Y \arrow{r}{p_2} \arrow[d, "{\sf h}"'] & \pi_{k-4} C \rar["\cdot 2"] \arrow{d}{\sf h} & \cdots \\
		\cdots \arrow{r} & \tmf_{k-3} C \rar["i_2"'] &\tmf_{k-3} Y \rar["p_2"'] & \tmf_{k-4} C\rar["\cdot 2"'] & \cdots
	\end{tikzcd}
\end{equation}
shows that $c=p_2(p_3(a))$ is also in the image of $\h$. Then we turn to the commutative diagram 
\begin{equation} \label{LES2}
	\begin{tikzcd}
		\cdots \arrow{r} & \pi_{k-4} S \arrow{r}{i_1} \dar["{\sf h}"'] & \pi_{k-4} C \arrow{r}{p_1} \arrow[d, "{\sf h}"'] & \pi_{k-6} S \rar["\cdot \eta"] \arrow{d}{\sf h} & \cdots \\
		\cdots \arrow{r} & \tmf_{k-4}S\rar["i_1"'] &\tmf_{k-4} C \rar["p_1"'] & \tmf_{k-6}S \rar["\cdot \eta"'] & \cdots
	\end{tikzcd}
\end{equation}
If $i_1^{-1}(c)$ is not in the image of $\h$, then the commutativity of \eqref{LES2} implies that there exists a simple $\eta$-torsion element $\overline{s}_{k-6} \in \pi_{k-6}S$ such that $\h(\overline{s}_{k-6})=0$. The proof is completed by using  192-periodicity of $\tmf$. 
\end{proof}

\subsection{$v_1$-periodic families}\label{Notn:v1}

We may ignore most $v_1$-periodic elements throughout our computations. We write
$$\tmf_*(X)^{\mathrm{tor}} := \ker\left( \ell: \tmf_*(X) \to v_1^{-1} \tmf_*(X) \right)$$
for the subgroup of $v_1$-torsion elements. Since every element of $tmf_*A_1$ is $v_1$-torsion and  $v_1$-torsion elements cannot map to $v_1$-periodic elements, we have the following lemma.

\begin{lemma}
For any nonzero element $a \in \tmf_*A_1$, we have
\begin{enumerate}
\item $p_3(a) \in \tmf_*(Y)^{\mathrm{tor}}$,
\item $p_2(p_3(a)) \in \tmf_*(C)^{\mathrm{tor}}$,
\item $p_1(p_2(p_3(a))) \in \tmf_*(S)^{\mathrm{tor}}$.
\end{enumerate}
\end{lemma}

\begin{notation}
We will refer to an element in $\tmf_*Y$ in stem $s$ which is detected in Adams--Novikov filtration $f$ by $y_{s,f}$. In the bidegrees we are interested in, there is only one nonzero $v_1$-torsion element, so these names uniquely determine elements up to higher Adams--Novikov filtration. 

Since Adams--Novikov spectral sequence computations of $\tmf_*C$ are not publicly available, we will refer to elements in $\tmf_*C$ using the names from \cite[Sec. 12.2]{BR21}. When $\tmf_s(C)^{\mathrm{tor}} \cong \mathbb{F}_2$, we will refer to the generator by $c_s$. 
\end{notation}

\subsection{Techniques}

In view of \cref{thm:families}, our goal is to find elements $a \in \tmf_k A_1$ such that $c := p_2(p_3(a)) \neq 0 \in \tmf_{k-4}C$, so we must analyze the long exact sequences \eqref{eqn:SSC}, \eqref{eqn:CCY}, and \eqref{eqn:YYA1}.

Analyzing \eqref{eqn:YYA1} is straightforward. An element $y \in \tmf_{k-3}Y$ is in the image of $p_3$ for some version of $A_1$ if and only if $v_1 \cdot y = 0 \in \tmf_{k-1}Y$ for a choice of $v_1$. The action of all choices of $v_1$ was computed in Figures 21-22 of  \cite{BBPX22}, so the image of $p_3$ is easily determined. These elements are listed in the left-most column of \cref{Table:A1lifts}.

To analyze \eqref{eqn:CCY}, we will use three main techniques. 

\begin{technique}[Vanishing kernel]\label{T1}
If $\ker(\cdot 2: \tmf_{k-1}(C)^{\mathrm{tor}} \to \tmf_{k-1}(C)^{\mathrm{tor}}) = 0$, then $p_2(y) = 0$ for any $y \in \tmf_kY$. If moreover $\tmf_k(Y)^{\mathrm{tor}} = \f_2\{y\}$ and $\coker(\cdot 2: \tmf_k(C) \to \tmf_k(C)) \cong \f_2\{c\}$, then $i_2(c) = y$. 
\end{technique}

\begin{technique}[Vanishing cokernel]\label{T2}
If $y \neq 0 \in \tmf_k(Y)^{\mathrm{tor}}$ and $\coker(\cdot 2: \tmf_k(C) \to \tmf_k(C)) = 0$, then $p_2(y) \neq 0$. Moreover, if $\ker(\cdot 2: \tmf_{k-1}(C)^{\mathrm{tor}} \to \tmf_{k-1}(C)^{\mathrm{tor}}) = \f_2\{c\}$, then $p_2(y) = c$. 
\end{technique}

For the next technique, note that $\tmf_*C$ and $\tmf_*Y$ are modules over $\tmf_*S$ and the maps in \eqref{eq:S-S-C} and \eqref{eq:C-C-Y} are $\tmf_*S$-linear. 

\begin{technique}[$\tmf_*S$-linearity]\label{T3}
Let $x \in \{\eta, \nu, \bar\kappa\} \subseteq \tmf_*S$. Suppose $y = x \cdot z \in \tmf_kY$ for some $z \in \tmf_*Y$. Then $p_2(y) = p_2(xz) = x \cdot p_2(z)$. In \Cref{Table:A1lifts} we will sometimes write $(3x)$ in the third column to clarify which $x$ has been used. 
\end{technique}

In \cref{Table:A1lifts}, we list the image of $p_2$ in the second column and the relevant technique in the third column. Applying Techniques 1-3 is fairly straightforward, so we will not write out the details here. However, there are a few cases that require further analysis. We analyze them in the following lemmas and then list the relevant lemma in the table. 
\begin{lemma}\label[lemma]{lem:15}
	We have 
	\begin{enumerate}
		\item $p_2(y_{15,1})=i(d_0)$
		\item  $p_2(y_{35, 5})=gi(d_0)$
		\item  $p_2(y_{55,9})=g^2i(d_0)$
		\item $p_2(y_{18, 2})= h_0i(e_0)$
		\item $i_2(g^2(i(\beta g)))= y_{75,13}$
		\item $p_2(y_{55,7})=g^2i(d_0)$
		\item  $p_2(y_{35, 3})=gi(d_0)$
	\end{enumerate}
\end{lemma}

\begin{proof}
	The first statement is obvious from analysis of the long exact sequence \eqref{eq:C-C-Y}. The second statement follows from the first statement and the facts  that $y_{35, 5}=\bar{\kappa} y_{15,1}$  and that $g$ detects $\bar{\kappa}$ in the Adams spectral sequence. 
	Note also that both $y_{35, 5}$ and $gi(d_0)$ are the classes in the top filtration in the corresponding stems, so there is no ambiguity with classes being defined up to higher filtration. 
	
	Statement (3) follows completely analogously to (2). For (4) we use (1) together with $\nu$-linearity. 
	
	As a consequence of (2) we have that either $y_{35, 3}$ or $y_{35,3}+y_{35,5}$ maps to $0$ under $p_2$. Similarly, as a consequence of (3) we have that either $y_{55, 7}$ or $y_{55,7}+y_{55,9}$ maps to $0$ under $p_2$. In order to determine which of these happen, we multiply by $\bar{\kappa}$ once again and consider stem 75. Here there is only one possibility: due to sparseness we have  $i_2(g^2 i(\beta g))=y_{75,13}=\bar{\kappa}y_{55,9}$.  This implies that in stem 55 we have $i_2(g i(\beta g))=y_{55,7}+y_{55,9}$ and (6) follows from exactness. Then (7) follows by $\bar{\kappa}$-linearity. 
  \end{proof}

\begin{lemma}\label[lemma]{lem:56}
	The following claims hold: 
	\begin{enumerate}
		\item $p_2(y_{56,6})=g \cdot i(\beta g)$
		\item $p_2(y_{76,10})=g^2 \cdot i(\beta g)$ 
	\end{enumerate}
\end{lemma}

\begin{proof}
 For the first claim note that $\bar{\kappa} \cdot y_{56,6} \neq 0$ and $\bar{\kappa} \cdot y_{56,0}=0$ for any class $y_{56,0}$.  Since $\bar{\kappa}$-torsion classes cannot map to $g\cdot i(\beta g)$ (which supports a non-trivial product with $\bar{\kappa}$), the only option is $p_2(y_{56,6})=g \cdot i(\beta g)$.
 The second claim follows by $\bar{\kappa}$-linearity. 
 \end{proof}
\begin{lemma}\label[lemma]{lem:86-98}
	The following claims hold:
	\begin{enumerate}
		\item $p_2(y_{46,4})=g^2\widehat{h}_2$
		\item $p_2(y_{66,8})=g^3\widehat{h}_2$
		\item $p_2(y_{86,12})=g^4\widehat{h}_2$
			\item $p_2(y_{66,2})=c_{65}:=\beta^2g\widehat{\beta}+d_0w_2\widehat{h}_2$
		\item $p_2(y_{98,4})=c_{97}$
	\end{enumerate}
\end{lemma}
\begin{proof}
	As in the previous lemma, we note that $y_{46,4}$ is the only class in $\tmf_{46}Y$ which is not simple $\bar{\kappa}$-torsion. The class $g^2\widehat{h}_2\in \tmf_{45}C$ is in the kernel of multiplication by $2$, so it is in the image of $p_2$, and, since it is not a simple $\bar{\kappa}$-torsion class, (1) follows. 
 	
 	Together with $\bar{\kappa}$-linearity, (1) implies (2) and (3). 
 
	The class $c_{65} \in \tmf_{65}C$ is not simple $\nu$-torsion. The only class in $\tmf_{66}Y$ which is not simple $\nu$-torsion is   $y_{66,2}$, which implies (4).

	 To prove (5), first, we divide by $\bar{\kappa}$ and consider $y_{78,0}$. By the same arguments  as above, since $y_{78,0}$ is the only class in stem 78 which is not simple $\bar{\kappa}$-torsion, in the long exact sequence \eqref{eq:C-C-Y} it maps into the unique non-simple $\bar{\kappa}$ torsion class in $\tmf_{77}C$, which we denote by $c_{77}$. Then $\bar{\kappa}$ linearity implies statement (2).
\end{proof}

\begin{lemma}\label[lemma]{lem:JD}
The following claims hold:
\begin{enumerate}
\item $p_2(y_{20,2}) = d_0 \widehat{h}_2$;
\item $p_2(y_{45,3}) = g i(\alpha^2)$ and $p_2(y_{45,9}) = 0$;
\item $p_2(y_{51,1}) = 0$;
\item $p_2(y_{56,2}) = 0$;
\item $p_2(y_{60,12}) = 0$;
\item $p_2(y_{68,2}) = c_{67}$;
\item $p_2(y_{71,9}) = gi(\beta^2)$.
\end{enumerate}
\end{lemma}

\begin{proof}
We will use the process of elimination: 
\begin{enumerate}
\item Suppose $\tmf_k(Y)^{\mathrm{tor}} = \f_2\{y_1,y_2\}$, $\ker( \tmf_{k-1}(C)^{\mathrm{tor}} \xrightarrow{\cdot 2} \tmf_{k-1}(C)^{\mathrm{tor}}) = \f_2\{c_{k-1}\}$, and $\coker(\tmf_k(C) \xrightarrow{\cdot 2} \tmf_k(C)) \cong \f_2\{c_k\}$. If $i_2(c_k) = y_1$, then $p_2(y_2) = c_{k-1}$. 
\item 
Similarly, if $\tmf_k(Y)^{\mathrm{tor}} = \f_2\{y_1,y_2\}$, $\ker( \tmf_{k-1}(C)^{\mathrm{tor}} \xrightarrow{\cdot 2} \tmf_{k-1}(C)^{\mathrm{tor}}) = \f_2\{c_1,c_2\}$, and $\coker(\tmf_k(C) \xrightarrow{\cdot 2} \tmf_k(C)) = 0$, then if $p_2(y_1) = c_1$, then $p_2(y_2) = c_2$ or $p_2(y_2) = c_2 + c_1$. In the latter case, we may change the basis and replace $y_2$ by $y_1 + y_2$ so that the image is just~ $c_2$. 
\end{enumerate}
We apply these arguments for the case in the lemma as follows: 

\begin{enumerate}

\item We have $i_2(\bar{\kappa} \cdot 1) = \bar{\kappa} \cdot i(1) = y_{20,4}$ by \cref{T3}. So $p_2(y_{20,4})=0$ and $p_2(y_{20,2}) \neq  0$; the only possibility is $p_2(y_{20,2}) = d_0 \widehat{h}_2$. 

\item By \cref{T1}, $i_2(g \cdot \widehat{h}_2) = y_{25,5}$, so by \cref{T3}, $i_2(\bar{\kappa} \cdot g \cdot \widehat{h}_2) = \bar{\kappa} y_{25,5} = y_{45,9}$. Therefore $p_2(y_{45,3}) = g \cdot i(\alpha^2)$, and it follows immediately that $p_2(y_{45,9}) = 0$. 

\item We have $p_2(y_{51,1})=0$ because the only possible target is $g \cdot i(\beta^2)$, but  $y_{51,1}$ is simple $\bar{\kappa}$-torsion and  $g \cdot i(\beta^2)$ is not. 

\item By \cref{T2}, $p_2(y_{36,2}) = i(\beta g)$, so by \Cref{T3}, $p_2(y_{56,6}) = g \cdot i(\beta g)$. 

\item \cref{T3} implies $p_2(y_{60,10}) = g^2 d_0 \widehat{h}_2$, so $p_2(y_{60,12}) = 0$.

\item By \cref{T1}, $i_2(i(h_0w_2)) = y_{48,4}$, so by \cref{T3}, $i_2(g \cdot i(h_0w_2)) = y_{68,8}$. Therefore $p_2(y_{68,2}) = \beta^2 g \widehat{\beta} + d_0 w_2 \widehat{h}_2 =: c_{67}$. 

\item \cref{T3} implies $p_2(y_{71,3}) = g \cdot h_0 w_2 \widehat{h}_0$, so $p_2(y_{71,9}) = g \cdot i(\beta^2)$. 

\end{enumerate}
\end{proof}

Analyzing the long exact sequence \eqref{eqn:SSC} is easy using Bruner and Rognes' notation  in Figures 12.9-12.16 of \cite{BR21}: an element containing $\widehat{c}$ will have nontrivial image under $p_1$, while an element containing $i(s)$ will have preimage $s$. After converting Adams spectral sequence names to the Adams--Novikov names from \cite{DFHH14}, we list the images under $p_1$ and preimages under $i_1$ of the elements from $\operatorname{img}(p_2)$ in columns 4 and 5, respectively, of \cref{Table:A1lifts}.

\section{Summary table}

We summarize our calculations in \Cref{Table:A1lifts} as follows. The leftmost column lists the image of $p_3$ in $\tmf_*Y$, i.e. it consists of simple $v_1$-torsion elements, which, for example, can be read off of Figures 8 and 9 in \cite{BBPX22}. We determine their image in column $2$ and indicate the technique used, among \Cref{T1} through \Cref{T3}, in column $3$.  

We calculate the image  under $p_1$ of nonzero elements in column $2$ and record them in column $4$. If the image is zero, we identify an $\eta$-torsion element which is its lift along $i_1$ and record it in column $5$. These calculations are immediate from \cite{BR21}, so we do not list techniques.

\begin{longtable}{| l| | l l| | l |l|}
\caption{Detecting elements in $\tmf_*$}  \label{Table:A1lifts} \\
\toprule
$\mr{img}(p_3)$   & $\mr{img}(p_2)$ & (T)  & $\mr{img}(p_1)$   &  $i_1^{-1}(-)$  \\
\midrule 
${\sf y}_{3,1}$ & $0$ & (1)  &  &  \\
${\sf y}_{6,2}$ & $0$ &  (3) &  &  \\
${\sf y}_{8,2}$ & $0$ &  (1) &  &    \\
${\sf y}_{11,3}$ & $0$ &  (1) & & \\
${\sf y}_{14,2}$ & $0$ & (1) && \\
${\sf y}_{18,2}$ & $h_0 \cdot i(e_0)$ &  \Cref{lem:15} & $0$ & $\kappa \nu$ \\
${\sf y}_{20,2}$ & $d_0 \widehat{h}_2$ &  \Cref{lem:JD} & $\kappa \nu$ & \\
${\sf y}_{21,3}$ & $h_0^2 g i(1)$ &  $(3\eta)$ & $0$ & $4 \bar{\kappa}$ \\
${\sf y}_{23,3}$ & $h_0 g \widehat{h}_0$ &   $(3\nu)$ & $4 \bar{\kappa}$  &   \\
${\sf y}_{26,4}$ & $g \widehat{h}_2$ &  $(3\nu)$& $0$ & $\eta \Delta$ \\
${\sf y}_{29,5}$ & $0$ &  (1) && \\
${\sf y}_{34,6}$ & $0$ &  (1)   && \\
${\sf y}_{35,3}$ & $gi(d_0)$  &   \Cref{lem:15}   &0 &$\bar{\kappa}\kappa$ \\
${\sf y}_{39,7}$ & $0$ &   (1)   &&  \\
${\sf y}_{40,6}$ & $\bar{\kappa} \cdot d_0 \widehat{h}_2$ & $(3\bar{\kappa})$  & $0$ & $\kappa \cdot \eta \Delta$ \\
${\sf y}_{44,8}$ & $0$ &   (1)  & &  \\
${\sf y}_{45,3}$ & $g i(\alpha^2)$ &  \Cref{lem:JD}  & $\bar{\kappa}^2 \eta^2$ & \\
${\sf y}_{45,9}$ & $0$ & \Cref{lem:JD} &&\\
${\sf y}_{50,4}$ & $0$ & (1) & & \\
${\sf y}_{50,6}$ & $0$ & (1) & &     \\
${\sf y}_{51,1}$ & $0$ & \Cref{lem:JD}  & &  \\
${\sf y}_{54,2}$ & $0$ & $(3\nu)$     & & \\
${\sf y}_{55,7}$ & $g^2 i(d_0)$  & \Cref{lem:15} & $0$ & $\bar{\kappa}^2 \kappa$ \\
${\sf y}_{56,2}$ & $0$ & \Cref{lem:JD}  & &  \\
${\sf y}_{57,11}$ & $0$ & $(3\nu)$ & &   \\ 
${\sf y}_{59,3}$ & $0$ & $(3\nu)$ & & \\ 
${\sf y}_{60,10} $ & $\bar{\kappa}^2 \cdot d_0 \widehat{h}_2$ & $(3\kappabar)$ & $0$ & $\bar{\kappa} \kappa \cdot \eta \Delta$  \\
${\sf y}_{60,12} $ & $0$ & \Cref{lem:JD} & &   \\
${\sf y}_{62,2}$ & $0$ & (1) & &  \\
${\sf y}_{65,7}$ & $0$ &(1) &&\\
${\sf y}_{65,13}$ & $0$ & $(3\bar{\kappa})$ &&\\
${\sf y}_{66,2}$ & $c_{65}$ &\Cref{lem:86-98} & $0$ & $\kappa \cdot \nu \Delta^2$ \\ 
${\sf y}_{68,2}$ & $c_{67}$  &\Cref{lem:JD} & $\kappa \cdot \nu \Delta^2$ &   \\
${\sf y}_{69,3}$ & $h_0 g i(h_0w_2)$ &(2) & $0$ & $\bar{\kappa} \cdot 4 \Delta^2$  \\
${\sf y}_{70,8}$ & $0$ & (1)  & &  \\
${\sf y}_{70,10}$ & $ 0$ &(1)  & &\\
${\sf y}_{71,3}$ & $\nu \cdot c_{67}$ &$(3\nu)$ & $\bar{\kappa} \cdot 4 \Delta^2$ & \\
${\sf y}_{71,9}$ & $g i(\beta^2) $ & \Cref{lem:JD} & $0$ & $\bar{\kappa} \cdot \eta^2 \Delta^2$  \\
${\sf y}_{74,4}$ & $0$ & (1) & & \\ 
${\sf y}_{75,13}$ & $0$ &(1)&&\\
${\sf y}_{76,10}$ & $g^2 i(\beta g)$ &\Cref{lem:56}& $0$ & $(\eta \Delta)^3$ \\ 
${\sf y}_{77,5}$ & $0$ & (1) & & \\
${\sf y}_{80,16}$ & $0$ &(1) && \\
${\sf y}_{81,11}$ & $g^4 i(1)$ &(2) & $0$ & $\bar{\kappa}^4$ \\
${\sf y}_{82,6}$ & $0$ & (1) & & \\
${\sf y}_{83,3}$ & $c_{82}$ & (2) & $\bar{\kappa}^4$ & \\
${\sf y}_{85,17}$ & $0$ &$(3\bar{\kappa})$ &&\\
${\sf y}_{86,12}$ & $g^4 \widehat{h}_2$ &\Cref{lem:86-98} & $0$ & $\bar{\kappa}^3 \cdot \eta \Delta$  \\
${\sf y}_{87,7}$ & $0$ & (1) & & \\
${\sf y}_{88,6}$ & $\bar{\kappa} \cdot c_{67}$ &(3)& $\bar{\kappa}^3 \cdot \eta \Delta$ & \\
${\sf y}_{90,14}$ & $0$ &(1)&&\\
${\sf y}_{91,13}$ & $g^3 \cdot i(\beta^2)$ &(2) & $0$ & $\bar{\kappa}^2 \cdot \eta^2 \Delta^2$ \\
${\sf y}_{92,8}$ & $0$ & (1) & &  \\
${\sf y}_{93,3}$ & $c_{92}$ &(2) & $\bar{\kappa}^2 \cdot \eta^2 \Delta^2$ &  \\
${\sf y}_{96,14}$ & $0$ &(1) &&\\
${\sf y}_{97,9}$ & $0$ &(3) &&\\
${\sf y}_{98,4}$ & $c_{97}$ & \Cref{lem:86-98} & $0$ & $\eta \Delta^4$  \\
${\sf y}_{101,15}$ & $0$ &(1) &&  \\
${\sf y}_{102,2}$ & $0$ & (3) &&\\
${\sf y}_{102,10}$ & $0$ &(3)  & & \\
${\sf y}_{103,7}$ & $\bar{\kappa} \cdot c_{82}$ &(3) & $\bar{\kappa}^5$  &  \\
${\sf y}_{105,21}$ & $0$ &(3) &&\\
${\sf y}_{106,16}$ & $\bar{\kappa} \cdot g^4 \widehat{h}_2$ &(3)& $0$ & $\bar{\kappa}^4 \cdot \eta \Delta$ \\
${\sf y}_{107,3}$ & $0$ & (1) & &  \\
${\sf y}_{107,11}$ & $0$ & (1) & & \\
${\sf y}_{108, 10}$  & $\bar{\kappa}^2 \cdot c_{67}$ & (3) & $\bar{\kappa}^4 \cdot \eta \Delta$ & \\
${\sf y}_{111,17}$ & $c_{110}$ &(2)& $0$ & $\bar{\kappa}^3 \cdot \eta^2 \Delta^2$ \\
${\sf y}_{112,12}$ & $0$ & (1) & & \\
${\sf y}_{113,7}$ & $\bar{\kappa} \cdot c_{92}$ & (3) & $\bar{\kappa}^3 \cdot \eta^2 \Delta^2$ & \\
${\sf y}_{116,18}$ & $0$ &(3) &&\\
${\sf y}_{117,3}$ & $c_{116}$ & (2,3) & $0$ & $\nu^2 \cdot \kappa \Delta^4$ \\
${\sf y}_{117,13}$ & $0$ &(3) &&\\
${\sf y}_{118,8}$ & $\bar{\kappa} \cdot c_{97}$ & (3) & $0$ & $\bar{\kappa} \cdot \eta \Delta^4$ \\
${\sf y}_{119,3}$ & $c_{118}$ & (2) & $\nu^2 \cdot \kappa \Delta^4$ & \\
${\sf y}_{122,4}$ &$0$ & (1) & &  \\
${\sf y}_{122,14}$ & $0$ & (1)  & & \\
${\sf y}_{123,11}$ & $\bar{\kappa}^2 \cdot c_{82}$ & (3) & $0$ & $\eta^2 \Delta^5$ \\
${\sf y}_{127,15}$ & $0$ & (1) & &\\
${\sf y}_{128,14}$ & $\bar{\kappa}^3 \cdot c_{67}$ & (3) & $\bar{\kappa}^5 \cdot \eta \Delta$ & \\
${\sf y}_{132,16}$ & $0$ & (1) &&\\
${\sf y}_{133,11}$ & $\bar{\kappa}^2 \cdot c_{92}$ & (3) & $\bar{\kappa}^4 \cdot \eta^2 \Delta^2$ &  \\
${\sf y}_{137,17}$ & $0$ &(3) &&\\
${\sf y}_{138,12}$ & $\bar{\kappa}^2 \cdot c_{97}$ & (3) & $0$ & $\kappa \cdot 2\nu \Delta^5$ \\
${\sf y}_{142,18}$ & $0$ &(1) &&\\
${\sf y}_{143,15}$ & $\bar{\kappa}^3 \cdot c_{82}$ & (3) & $0$ & $\kappa \Delta^4 q$ \\
${\sf y}_{148,18}$ & $\bar{\kappa}^4 \cdot c_{67}$ & (3) & $0$ & $\eta \Delta^6$ \\
${\sf y}_{150,2}$ & $0$ &(1,3) &&\\
${\sf y}_{153,11}$ & $0$ & (3) &&\\
${\sf y}_{153,15}$ & $\bar{\kappa}^3 \cdot c_{92}$ & (3) & $\bar{\kappa}^5 \cdot \eta^2 \Delta^2$ & \\
${\sf y}_{155,3}$ & $0$ & (1) & & \\
${\sf y}_{158,16}$ & $0$ & (1) & & \\
${\sf y}_{161,7}$ & $0$ & (1) & & \\
${\sf y}_{165,3}$ & $c_{164}$ & (2) & $0$ & $\bar{\kappa} \cdot 4 \Delta^6$ \\
${\sf y}_{167,3}$ & $c_{166}$ & (2) & $\bar{\kappa} \cdot 4 \Delta^6$ & \\
${\sf y}_{168,22}$ & $0$ & (1) & & \\
${\sf y}_{170,4}$ & $0$ & (1) & & \\
\toprule
\end{longtable}
Given the data in this table and \Cref{thm:families} it is now easy to prove our main theorems.

\begin{proof}[{Proof of \Cref{MT}}]
	For any element $x \in$ \{$y_{26,4}$, $y_{76,10}$, $y_{98,4}$, $y_{123,11}$, $y_{148,18}$\} we have that the following are true: 
	\begin{enumerate}
		\item $p_2(x)\neq 0$
		\item $p_1(p_2(x))=0$
		\item $i_1^{-1}(p_2(x)) \in \pi_*\tmf$ is not in the Hurewicz image.
	\end{enumerate}
Then the result follows from \Cref{thm:families}(1).
\end{proof}

\begin{proof}[{Proof of \Cref{MTb}}]
	For any element $x$ in the given set the following can be read off of \Cref{Table:A1lifts}:
	\begin{enumerate}
		\item $x=p_1(p_2(p_3(a)))$ for some $a\in \tmf_*A_1$ 
		\item $x$ is in the image of the Hurewicz homomorphism
	\end{enumerate}
	The result follows from \Cref{thm:families}(2). 
\end{proof}
\bibliographystyle{alpha}
\bibliography{master}

\end{document}